\documentclass[10pt]{amsart}
\usepackage{max-paper}
\usetikzlibrary{backgrounds}

\newcommand{\stack}[5]{
    \begin{tikzpicture}[scale=0.6]
        \draw[thick] (-0.5,0) -- (1,0) -- (1,-2) -- (2,-2) -- (2,0) -- (3.5,0);
        \node[fill = white, draw = white] at (2.8,.5) {$#5$};
        \node[fill = white, draw = white] at (1.5,-1.5) {$#4$};
        \node[fill = white, draw = white] at (1.5,-.8) {$#3$};
        \node[fill = white, draw = white] at (1.5,-.1) {$#2$};
        \node[fill = white, draw = white] at (.2,.5) {$#1$};
    \end{tikzpicture}
}
\newcommand{\arrow}{
    \!\!\!\!\!\!\!
    \begin{tikzpicture}[scale=0.6]
        \node at (0,0) {};
        \node at (0,0.7) {$\longrightarrow$};
    \end{tikzpicture}
    \!\!\!\!\!\!\!
}

\title{A Short Proof of Zhang's Conjecture on Stack-Sortable Permutations}
\address{Massachusetts Institute of Technology}
\email{maxlu@mit.edu}
\begin{document}
\begin{abstract}
    We give a short proof of Zhang's conjecture that the number of stack-sortable permutations of length $n + 1$ which end in their smallest element is equal to the $n$-th Motzkin number.
\end{abstract}
\maketitle

\section{Introduction}

The stack-sorting map $s$ was defined by West~\cite{west}. It acts on the set of permutations a finite ordered set. The map is defined in terms of the following algorithm. Given an input $\pi$, we first write the $\pi$ on the right side of the stack, and the algorithm will push the elements of $\pi$ from the right side of the stack to the left. At each step, we pop elements from the stack to the left until the next element of $\pi$ on the right is less than the top element of the stack, or the stack is empty. Then push the next element of $\pi$ to the stack.
Once all elements of $\pi$ are processed, pop all elements from the stack.

The resulting sequence on the left-hand-side of the stack is the image $s(\pi)$ of $\pi$ under the stack-sorting map.

An example is given below with $\pi = 4231$.

\begin{figure}[h]
    \begin{center}
        \stack{}{}{}{}{4231}\arrow
        \stack{}{}{}{4}{231}\arrow
        \stack{}{}{2}{4}{31}\arrow
        \stack{2}{}{}{4}{31}\arrow
        \stack{2}{}{3}{4}{1}\arrow
        \stack{2}{1}{3}{4}{}\arrow
        \stack{21}{}{3}{4}{}\arrow
        \stack{213}{}{}{4}{}\arrow
        \stack{2134}{}{}{}{}
    \end{center}
    \caption{An example of the stack-sorting map with $\pi = 4231$, yielding $s(\pi) = 2134$.}
\end{figure}

Define $\langle \pi \rangle$ be the minimum value of $n$ such that $s ^ n(\pi)$ is sorted in increasing order. A permutation $\pi$ is $t$-stack-sortable if $\langle \pi \rangle \le t$.

One interesting area of study is the enumeration of permutations $\pi \in S_n$ which are $t$-stack sortable. Knuth~\cite{knuth} showed that the $1$-stack sortable permutations are exactly those which are $231$-avoiding, and that the number of such permutations is $C_n = \frac{1}{n + 1} \binom{2n}{n}$.
Zeilberger~\cite{zeilberger} showed that the number of $2$-stack-sortable permutations is equal to $\frac{2}{(n + 1)(2n + 1)}\binom{3n}{n}$.

In this paper, we focus on the behavior of the stack-sorting map on the subset of permutations which end with their smallest element, following work by Zhang~\cite{zhang2026enum}.

\begin{definition}
    Let $S_n'$ denote the set of permutations of $[n]$, followed by $0$, and let $\calW_t'(n)$ denote the set of $t$-stack-sortable permutations in $S_n'$.
\end{definition}

We recall the following definition of Motzkin paths, which are a generalization of Dyck paths.

\begin{definition}
    A Motzkin path of length $n$ is a path starting at $(0, 0)$ and ending at $(n, 0)$ consisting of up steps $(1, 1)$, down steps $(1, -1)$, and horizontal steps $(1, 0)$, which remain weakly above the line $y = 0$. The number of such paths is the $n$-th Motzkin number.
\end{definition}

We now state our main result, which was conjectured by Zhang~\cite[Conjecture 6.1]{zhang2026enum}.

\begin{theorem}\label{thm:motzkin}
    For all $n$, $\abs{\calW_2'(n)}$ is the $n$-th Motzkin number.
\end{theorem}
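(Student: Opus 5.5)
The plan is to first turn $2$-stack-sortability of $\sigma 0$ into a condition on $s(\sigma)$ alone, and then count the permutations satisfying that condition by a recursive decomposition that yields the Motzkin recurrence $M_n = M_{n-1} + \sum_{k=0}^{n-2} M_k M_{n-2-k}$.

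\textbf{Step 1: computing $s(\sigma 0)$.} Let $\sigma \in S_n$. While the stack processes $\sigma$, an element is popped exactly when a larger element arrives. So after $\sigma$ has been read, the stack contains exactly the right-to-left maxima $b_1 < \dots < b_k = n$ of $\sigma$, with $b_1$ on top. The output so far is $s(\sigma)$ with its final block $b_1\cdots b_k$ removed; call this prefix $A$. Since $0$ is smaller than everything, it is pushed without popping anything, and then the stack empties. Hence
\[ s(\sigma 0) = A\,0\,b_1 b_2\cdots b_k, \qquad \text{where } s(\sigma) = A\, b_1\cdots b_k .\]

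\textbf{Step 2: the sortability condition.} By Knuth's result, $\sigma 0 \in \calW_2'(n)$ if and only if $s(\sigma 0)$ avoids $231$. Any ascent $x<y$ inside $A$ would form a $231$ together with the later $0$, so $A$ must be decreasing. Conversely, suppose $A$ is decreasing and $B=b_1\cdots b_k$ is increasing, and consider a triple $y\,z\,x$ in positions order with $x<y<z$.
\begin{itemize}
\item $y$ and $z$ cannot both lie in $A$, since $A$ is decreasing.
\item $y$ cannot lie in $B$, since everything after it is larger.
\item So $y\in A$ and $z\in B$. Then $x$ comes after $z$, so $x\in B$, and therefore $x>z$, a contradiction.
\end{itemize}
Thus $\sigma 0$ is $2$-stack-sortable if and only if $s(\sigma)$ with its trailing right-to-left maxima removed is decreasing. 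Equivalently, $s(\sigma)$ is a ``valley'' word, decreasing and then increasing, whose increasing tail is exactly the set of right-to-left maxima of $\sigma$. Let $a_n$ be the number of such $\sigma\in S_n$.

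\textbf{Step 3: the recursion.} Write $\sigma = L\,n\,R$, so that $s(\sigma) = s(L)\,s(R)\,n$, and note that the right-to-left maxima of $\sigma$ are $n$ followed by those of $R$.
\begin{itemize}
\item If $R$ is empty, the condition says exactly that $s(L)$ is decreasing. This happens only when $|L|\le 1$, since stack-sorted words end in their maximum.
\item In general, the decreasing part of $s(\sigma)$ must absorb all of $s(L)$ together with the non-maximum part of $s(R)$, and the increasing tail must be precisely the right-to-left maxima of $R$ followed by $n$.
\end{itemize}
This should force $s(L)$ to be decreasing, so $|L|\le1$. When $|L|=1$, the single entry of $L$ must also exceed every non-maximum entry of $s(R)$ while the valley shape is preserved.

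I would then refine the count by a statistic, most likely the first letter of $\sigma$ or the number of right-to-left maxima. This should give either a two-term recursion of the form ``a horizontal step, or an up step, an arbitrary piece, a down step, and an arbitrary piece'', or a catalytic-variable functional equation that one solves to get $M(x) = 1 + xM(x) + x^2M(x)^2$.

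\textbf{Main obstacle.} Step 3 is where I expect the difficulty. The condition mixes $s(L)$, $s(R)$ and the right-to-left maxima, so the naive split at $n$ does not give independent pieces. My first fallback is to split instead at the position of the element $1$, or at the last left-to-right minimum, which may separate $\sigma$ into two independent valley-type pieces and produce the $x^2M^2$ term directly. If that also fails, I would compute $a_n$ for small $n$ (expecting $1,1,2,4,9,21$) to guess the refined statistic, and then build an explicit bijection to Motzkin paths. In that bijection, positions of $A$ and $B$ in $s(\sigma)$ would correspond to up, down or level steps.
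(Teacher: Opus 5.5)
Your Steps 1 and 2 are correct. They give a clean criterion: $\sigma 0\in\calW_2'(n)$ if and only if the entries of $\sigma$ that are not right-to-left maxima appear in decreasing order in $s(\sigma)$. The gap is that the count, which is the whole content of the theorem, is never established. Step 3 is a list of intentions (``should force'', ``I would then refine'', ``most likely''), and no recurrence is actually derived.

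The first reduction is easy to make rigorous. Write $s(R)=A_RB_R$; then the prefix is $A=s(L)A_R$, and since $s(L)$ ends in its maximum, $|L|\le 1$. Hence
\[a_n=a_{n-1}+\#\{(\ell,R)\},\]
where $R$ is a valid arrangement of $[n-1]\setminus\{\ell\}$ whose non-right-to-left-maxima are all smaller than $\ell$. The Motzkin convolution $\sum_k M_kM_{n-2-k}$ has to come from this second term, and it does not split into independent pieces.

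The statistics you suggest (first letter, number of right-to-left maxima) do not determine how many $\ell$ are admissible. For instance, $R'=312$ and $R'=321$ share their first letter, and $132$ and $231$ share their number of right-to-left maxima, yet they admit different numbers of $\ell$. What does control it is $t(R)$, the largest entry of $R$ that is not a right-to-left maximum ($t=0$ if there is none). After standardizing $R$ to $R'\in S_{n-2}$, the admissible values are exactly $\ell=t(R')+1,\dots,n-1$. Without a recursion for the joint distribution of $(|R|,t(R))$, the argument does not close.

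Here is one way to finish along your lines. Put $g(\sigma)=|\sigma|-t(\sigma)$ and $F(x,u)=\sum_{\sigma}x^{|\sigma|}u^{g(\sigma)}$, summed over valid $\sigma$ including the empty one. For $\sigma=nR$ you have $g(\sigma)=g(R)+1$. For $\sigma=\ell\,n\,R$ you have $t(\sigma)=\ell$, so $g(\sigma)$ runs over $1,\dots,g(R')+1$. Hence
\[F(x,u)=1+xuF(x,u)+x^2\,\frac{uF(x,1)-u^2F(x,u)}{1-u},\]
with kernel $(1-u)(1-xu)+x^2u^2$. Substitute its power-series root $u_0$, given by $u_0^{-1}=\tfrac12\bigl(1+x+\sqrt{1-2x-3x^2}\bigr)$. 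This gives $F(x,1)=(1-u_0^{-1})/x^2=\frac{1-x-\sqrt{1-2x-3x^2}}{2x^2}$, which is the Motzkin series.

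Completed this way, your proof is self-contained and uses only Knuth's $231$ criterion. The paper instead imports Zhang's theorem that preimage counts are products of hook lengths, and notes that all hook lengths equal $1$ when $w(\alpha)\le 2$. It then maps linear extensions of width-at-most-$2$ composition diagrams bijectively to Motzkin paths: singleton rows give level steps, second-column cells give up steps, and first-column cells of length-$2$ rows give down steps. That route is shorter and bijective, but it rests entirely on the imported structure theorem.
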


Inagaki and Luo~\cite{inagaki2026motzkinnumberscount2stacksortable} give a proof of~\cref{thm:motzkin} by finding a bijection from $\calW_2'(n)$ to Young tableaux with $n$ cells of width at most $3$, which are also enumerated by Motzkin numbers. Our proof, which was discovered independently, is much shorter and quite different, giving a bijection to Motzkin paths.

\section{Preliminaries}

We first restate a few definitions and results from~\cite{zhang2026enum}.
A composition of $n$ is an ordered sequence of positive integers with sum equal to $n$.
Let $\calC(n)$ denote the set of compositions of $n$.

\begin{definition}
    For a composition $\alpha = (\alpha_1, \dots, \alpha_\ell) \in \calC(n)$, the composition diagram of $\alpha$ is the set of cells
    \[D(\alpha) = \{(i, j) \mid 1 \le i \le \alpha_j\}.\] Let $w(\alpha) = \max \{\alpha_i\}$.
\end{definition}

We draw composition diagrams $D(\alpha)$ in English notation, with $(1, 1)$ at the upper-left.
We view $D(\alpha)$ as a poset, where $(a, b) \ge (c, d)$ if $a \le c$ and $b \le d$, so the poset increases up and to the left.
Let $\calL(D(\alpha))$ denote the set of linear extensions of $D(\alpha)$.

\begin{definition}
    For a composition diagram $D(\alpha)$, the hook lengths $h_\alpha(i, j)$ are defined by $h_\alpha(1, j) = 1$ and 
    \[h_\alpha(i, j) = \abs{\{(x, y) \in D(\alpha) \mid x \le i - 1, j' < y \le j\}},\] for $i > 1$, where
    \[j' = \max(\{y < j \mid \alpha_y \ge i - 1\} \cup \{0\}),\]
    that is, $j'$ is the row of the first cell above $(i - 1, j)$ in $D(\alpha)$, or $0$ if no such cell exists.
\end{definition}

\begin{ex}
    For $\alpha = (1, 3, 1, 4)$, the composition diagram $D(\alpha)$ is shown below (in English notation). Each cell is labelled by its hook length.
    \begin{figure}[h]
        \begin{center}
            \resizebox{0.2\textwidth}{!}{%
            \begin{tikzpicture}[every path/.style={draw, very thick}]
                \draw (0,0) rectangle ++(1,1) node[pos=.5] {1};
                \draw (1,0) rectangle ++(1,1) node[pos=.5] {1};
                \draw (2,0) rectangle ++(1,1) node[pos=.5] {3};
                \draw (3,0) rectangle ++(1,1) node[pos=.5] {4};
                \draw (0,1) rectangle ++(1,1) node[pos=.5] {1};
                \draw (0,2) rectangle ++(1,1) node[pos=.5] {1};
                \draw (1,2) rectangle ++(1,1) node[pos=.5] {1};
                \draw (2,2) rectangle ++(1,1) node[pos=.5] {3};
                \draw (0,3) rectangle ++(1,1) node[pos=.5] {1};
            \end{tikzpicture}
            }
        \end{center}
        \caption{The composition diagram $D(\alpha)$, where $\alpha = (1, 3, 1, 4)$. Each cell $(i, j)$ is labelled by $h_\alpha(i, j)$.}
        \label{fig:comp}
    \end{figure}
\end{ex}

We require the following result proven by Zhang~\cite{zhang2026enum}.

\begin{theorem}[\cite{zhang2026enum}]\label{thm:counting}
    There exists a map sending each $\pi \in S_n'$ to a pair $(\alpha_\pi, T_\pi)$ with $\alpha_\pi \in \calC(n)$ and $T_\pi \in \calL(D(\alpha_\pi))$ such that $\langle \pi \rangle = w(\alpha_\pi)$. Furthermore, for any $\alpha \in \calC(n)$ and $T \in \calL(D(\alpha))$, the number of permutations $\pi \in S_n'$ such that $(\alpha, T) = (\alpha_\pi, T_\pi)$ is
    \[\prod_{(i, j) \in D(\alpha)} h_\alpha(i, j).\]
\end{theorem}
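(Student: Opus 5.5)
The plan is to build the map $\pi \mapsto (\alpha_\pi, T_\pi)$ by following how the stack-sorting map moves elements past the final $0$, and then to count each fiber by inverting $s$ one pass at a time.

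\textbf{One pass.} First I would record what $s$ does to a word ending in its minimum. When $0$ arrives, every remaining stack entry exceeds it, so $0$ is pushed and then popped first. What is left in the stack is exactly the set of right-to-left minima of the prefix. Hence
\[
s(\sigma 0) = A\,0\,B,
\]
where $B$ is the increasing word of the right-to-left minima of $\sigma$ and $A = s(\sigma \setminus B)$, the image of the remaining letters in their original order. Iterating this, the letters strictly after $0$ are always increasing. Each pass carries a new batch of letters across $0$ and interleaves them into that increasing tail. The permutation is sorted exactly when nothing is left to the left of $0$.

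\textbf{Defining $(\alpha_\pi, T_\pi)$.} Next I would define the composition by these events. Group the letters of $\pi$ into blocks, one per row $j$, and let $\alpha_j$ be the pass at which block $j$ finishes crossing $0$. The cells $(1,j), \dots, (\alpha_j, j)$ record the passes during which block $j$ is still to the left of $0$. This gives $\sum_j \alpha_j = n$ after a suitable normalisation: each cell accounts for one letter, namely the one the block sheds or carries at that pass. With this setup, $\langle \pi \rangle = \max_j \alpha_j = w(\alpha_\pi)$ is immediate.

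The linear extension $T_\pi$ labels the cells by the order in which the corresponding letters first become right-to-left minima, equivalently by their relative values. Two order relations must be checked against the poset on $D(\alpha)$:
\begin{itemize}
\item[(i)] within a row, a letter crossing at pass $i$ must be below the one crossing at pass $i+1$;
\item[(ii)] between rows, the "first cell above" rule must be respected.
\end{itemize}
Both should follow from the description of $B$ as right-to-left minima.

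\textbf{Counting fibers.} I would induct on $w(\alpha)$, peeling off the first pass. Fix $(\alpha, T)$ and let $\alpha'$ be the composition obtained by deleting column $1$ and merging rows appropriately. Then $s$ maps the fiber of $(\alpha, T)$ onto the fiber of $(\alpha', T')$. Column $1$ contributes $h_\alpha(1,j) = 1$ for every $j$, since the first-column letters are determined as right-to-left minima.

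For each column $i > 1$, I must show the following. When a letter recorded at $(i-1, j)$ is pulled back from the increasing tail to its position in the preimage, the number of admissible positions equals $h_\alpha(i,j)$. That number should be the count of cells in columns $\le i-1$ lying between row $j'$ and row $j$. These are exactly the letters among which it can be interleaved without changing which letters are right-to-left minima.

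\textbf{Main obstacle.} The hard step is showing that these choices are independent and multiply. Concretely, the number of preimages of a given $\sigma$ under $s$ compatible with the prescribed crossing data must not depend on $\sigma$ beyond $(\alpha, T)$. I would first try to prove this with the decomposition $s(L\,m\,R) = s(L)\,s(R)\,m$ for the maximum $m$. I would also try a direct bijection between preimage choices and tuples
\[
(c_{i,j})_{(i,j) \in D(\alpha)}, \qquad 1 \le c_{i,j} \le h_\alpha(i,j),
\]
verified row by row from top to bottom.
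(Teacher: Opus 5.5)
The paper gives no proof of this statement; it quotes it from Zhang (the map from Section~3, the count from Theorem~4.10). Your proposal therefore has to stand on its own, and it does not. The first gap is the one-pass lemma, which is false as stated. An entry leaves the stack as soon as a larger letter arrives, so when $0$ arrives the stack holds the right-to-left \emph{maxima} of $\sigma$, not the minima. Nor is the part before $0$ equal to $s(\sigma\setminus B)$. If $\sigma = L_1 m_1 L_2 m_2 \cdots L_k m_k$ with $m_1 > \cdots > m_k$ the right-to-left maxima, then $s(\sigma 0) = s(L_1)\cdots s(L_k)\,0\,m_k \cdots m_1$. For $\sigma = 3412$ this gives $31024$, whereas your formula gives $34012$; even with maxima in place of minima you would get $13024$, since $s(31) = 13 \neq 31 = s(3)s(1)$. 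Everything downstream rests on this: what crosses at each pass, your relations (i)--(ii), and what ``pulling back'' means.

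The map itself is also never defined. ``Blocks'', ``a suitable normalisation'' and ``merging rows appropriately'' are placeholders. Nothing shows that each row receives exactly one letter per pass, that column sizes weakly decrease, or that labels decrease along rows and down columns. The last condition is the linear-extension condition; the ``first cell above'' rule in your (ii) belongs to the hook definition, not to the poset. A rule that at least yields a valid pair with $w(\alpha_\pi) = \langle \pi \rangle$ is the following. Each $s(L_i)$ ends in $\max L_i$, so every letter crossing at the next pass is $\max L_i$ for a unique $i$; attach it to $m_i$.

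The enumeration is the real content of the theorem, and you leave it as the ``main obstacle'' with only strategies to try, so it is not proved. The accounting you sketch also does not match where the multiplicities arise. With the correct pass, undoing pass $1$ means two things. First, cut the new prefix into consecutive, possibly empty, segments $s(L_1),\dots,s(L_k)$, with some endpoints forced by the row attachments. Second, choose a stack-sorting preimage of each segment using only letters below $m_i$. The number of choices is a sum over cuttings of products of preimage counts. One must prove that this, compounded over all passes, depends only on $(\alpha,T)$ and equals $\prod h_\alpha(i,j)$.

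In particular, undoing pass $1$ is not a factor-$1$ step, contrary to your claim that column $1$ ``contributes $1$''. With the attachment rule above, the fiber of $\alpha = (1,3)$ is $\{41230, 42130, 14230\}$, and all three have $s(\pi) = 12034$. So the factor $3 = h_\alpha(3,2)$ comes entirely from undoing pass $1$, namely from where the letter $1$ of cell $(3,2)$ sits in $\sigma$. It does not come from pulling a letter of cell $(2,2)$ back from the tail. Likewise, for $\alpha = (3)$ one has $s(1230) = s(2130)$.

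The missing ingredient is an explicit parametrization of each fiber by independent choices, with $h_\alpha(i,j)$ choices for cell $(i,j)$, together with a proof that this parametrization is bijective. That is exactly what the paper imports from Zhang, and your proposal names it as the obstacle rather than supplying it.
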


Note that for any $\pi \in S_n'$, $\langle \pi \rangle \ge 1$, since $\pi$ ends with $0$.

The definition of the map $\pi \mapsto (\alpha_\pi, T_\pi)$ is given in~\cite[Section 3]{zhang2026enum}, and the enumeration of preimages is proven in~\cite[Theorem 4.10]{zhang2026enum}. We omit the details of the map, as they are unimportant for the proof of~\cref{thm:motzkin}.


\section{Proof of~\cref{thm:motzkin}}

We first prove a lemma on hook-lengths, then give a bijection to Motzkin paths using~\cref{thm:counting}.

\begin{lemma}\label{lem:hook-one}
    For any $\alpha \in \calC(n)$, if $w(\alpha) \le 2$, then $h_\alpha(i, j) = 1$ for all $(i, j) \in D(\alpha)$.
\end{lemma}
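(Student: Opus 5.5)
Since $w(\alpha)\le 2$, every cell of $D(\alpha)$ lies in column $i=1$ or $i=2$. The plan is to dispose of the two columns separately, directly from the definition of $h_\alpha$.

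For $i=1$ we have $h_\alpha(1,j)=1$ by definition, so there is nothing to check.

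For $i=2$, fix a cell $(2,j)\in D(\alpha)$, so $\alpha_j\ge 2$. Then $i-1=1$, and every part of a composition is positive, so $\alpha_y\ge 1$ for all $y$. The first step is to determine $j'$.
\begin{itemize}
\item If $j\ge 2$, then $j-1$ satisfies $\alpha_{j-1}\ge 1$, and no index strictly between $j-1$ and $j$ exists, so $j'=j-1$.
\item If $j=1$, the set $\{y<1\mid \alpha_y\ge 1\}$ is empty, so $j'=0=j-1$.
\end{itemize}
In either case $j'=j-1$.

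With this, the hook count is
\[
h_\alpha(2,j)=\abs{\{(x,y)\in D(\alpha)\mid x\le 1,\ j-1<y\le j\}}=\abs{\{(1,j)\}}=1,
\]
because the only admissible row is $y=j$, and $(1,j)\in D(\alpha)$.

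The argument is essentially mechanical. The only point needing care is computing $j'$: it is the positivity of the parts of $\alpha$ that forces the cell directly above $(1,j)$ to exist, so that $j'=j-1$. Note also that the hypothesis $w(\alpha)\le 2$ is used only to rule out cells with $i\ge 3$. For such cells $i-1\ge 2$, so $j'$ can jump over short rows and the hook can be larger than $1$, as with the hooks $3$ and $4$ in the earlier example.
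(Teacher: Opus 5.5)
Your proof is correct and follows the paper's argument: column $1$ is trivial by definition, and for a cell $(2,j)$ the positivity of every part forces $j'=j-1$, so the hook is exactly $\{(1,j)\}$. You also handle the boundary case $j=1$ (where $j'=0$) explicitly, which the paper's proof leaves implicit.
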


\begin{proof}
    Let $\alpha = (\alpha_1, \dots, \alpha_\ell)$. By definition, $h_\alpha(1, j) = 1$ for all $1 \le j \le \ell$.
    For any cell $(2, j) \in D(\alpha)$, we have
    \[h_\alpha(2, j) = \abs{\{(x, y) \in D(\alpha) \mid x \le 1, j' < y \le j\}},\] for $i > 1$, where $j' = \min(\{y \mid \alpha_y \ge i - 1\} \cup \{0\})$. Since $(1, j - 1) \in D(\alpha)$ for all $j \ge 2$, we have $j' = j - 1$. Thus, $h_\alpha(2, j) = 1$, as desired.
\end{proof}

\begin{proof}[Proof of~\cref{thm:motzkin}]
    By combining~\cref{thm:counting} and~\cref{lem:hook-one}, we have
    \[\abs{\calW_2'(n)} = \abs{\{(\alpha, T) \mid \alpha \in \calC(n), w(\alpha) \le 2, T \in \calL(D(\alpha))\}}.\]
    Thus, it suffices to compute the right-hand-side.

    We give a bijection between such pairs and Motzkin paths of length $n$.
    Consider any such pair $(\alpha, T)$.
    Partition the cells of $D(\alpha)$ into the following three classes:
    \begin{align*}
        S_0 &= \{(1, j) \mid \alpha_j = 1\}, \\
        S_{-1} &= \{(1, j) \mid \alpha_j = 2\}, \\
        S_1 &= \{(2, j) \mid \alpha_j = 2\},
    \end{align*}
    and define $T_i = T\rec(S_i)$.
    For each $1 \le r \le n$, if $r \in T_i$, then the $r$-th step in the corresponding path is $(1, i)$.

    Note that the $k$-th largest elements of $T_1$ and $T_{-1}$ are in the same row, and the cell of $T_{-1}$ is to the left, and thus has a larger entry. Hence, the $k$-th $(1, -1)$ step in the path comes after the $k$-th $(1, 1)$ step, so the path remains weakly above the $x$-axis.
    Furthermore, $\abs{S_1} = \abs{S_{-1}}$, so the path ends at $(n, 0)$. Thus, the path we construct is indeed a Motzkin path.

    We now describe the inverse map. Given a valid path from $(0, 0)$ to $(n, 0)$, iterate through the steps in reverse order. For the $r$-th step, going from $r = n$ to $r = 1$, perform the following procedure:
    \begin{itemize}
        \item If the step is $(1, 0)$ or $(1, -1)$, add a cell labelled $r$ to the bottom of the first column.
        \item If the step is $(1, 1)$, add a cell labelled $r$ to the right of the highest cell corresponding to a $(1, -1)$ step which does not already have a cell to its right.
    \end{itemize}
    By construction, the entries are increasing up and to the left, so this gives a valid pair $(\alpha, T)$.

    These two maps describe a bijection, so $\abs{\calW_2'(n)}$ is the $n$-th Motzkin number.
\end{proof}

For concreteness, we give an example of the above maps.
\begin{ex}
    For $n = 12$ and $\alpha = (2, 1, 2, 2, 1, 1, 2, 1)$, the pair $(\alpha, T)$ on the left corresponds to the path on the right.
    \begin{figure}[h]
        \begin{center}
            \resizebox{0.8\textwidth}{!}{%
                \begin{tikzpicture}[every path/.style={draw, very thick}, subfig/.style={}]
                    \node[subfig] (A) at (0,0){
                        \begin{tikzpicture}
                            \draw (0,0) rectangle ++(1,1) node[pos=.5] {1};
                            \draw (0,1) rectangle ++(1,1) node[pos=.5] {4};
                            \draw (1,1) rectangle ++(1,1) node[pos=.5] {2};
                            \draw (0,2) rectangle ++(1,1) node[pos=.5] {5};
                            \draw (0,3) rectangle ++(1,1) node[pos=.5] {7};
                            \draw (0,4) rectangle ++(1,1) node[pos=.5] {8};
                            \draw (1,4) rectangle ++(1,1) node[pos=.5] {3};
                            \draw (0,5) rectangle ++(1,1) node[pos=.5] {9};
                            \draw (1,5) rectangle ++(1,1) node[pos=.5] {6};
                            \draw (0,6) rectangle ++(1,1) node[pos=.5] {10};
                            \draw (0,7) rectangle ++(1,1) node[pos=.5] {12};
                            \draw (1,7) rectangle ++(1,1) node[pos=.5] {11};
                        \end{tikzpicture}
                    };
                    \node[subfig, anchor=west] (B) at (5, 0){
                        \begin{tikzpicture}[show background grid]
                            \draw (0,0)--(1,0)--(2,1)--(3,2)--(4,1)--(5,1)--(6,2)--(7,2)--(8,1)--(9,0)--(10,0)--(11,1)--(12,0);
                        \end{tikzpicture}
                    };
                    \draw[<->] ($(A.east)+(0.5,0)$)--($(B.west)-(0.5,0)$);
                \end{tikzpicture}
            }
        \end{center}
        \caption{An example of the bijection in the proof of~\cref{thm:motzkin}.}
        \label{fig:bijection}
    \end{figure}
\end{ex}

\section{Acknowledgements}

This research was conducted at the University of Minnesota Duluth REU, which was funded by Jane Street Capital, Axiom Math AI, and a donation from Larry Penn. I am grateful to Joe Gallian and Colin Defant for organizing the REU and providing this opportunity.
I would also like to thank Eliot Hodges, Claire Kaneshiro, Noah Kravitz, and Carl Schildkraut for advising the REU.

\subsection{AI Usage Statement}

No AI assistance was used for this paper.

\bibliographystyle{alpha}
\bibliography{sources}

\end{document}